\documentclass[11pt]{article}

\usepackage{amsmath}
\usepackage{amssymb}
\usepackage{amsthm}
\usepackage{hyperref}

{\theoremstyle{plain}%
 \newtheorem{theorem}{Theorem}

}
{\theoremstyle{remark}

}
{\theoremstyle{definition}

}

\allowdisplaybreaks

\begin{document}

 \title{On a $_2F_1\big(\frac{1}{4}\big)$-identity due to Gosper}
\date{April 2026}
\author{Cetin Hakimoglu-Brown\thanks{Email address: {\tt mathemails@proton.me}}\\ Berkeley, CA, USA}

\maketitle 

\begin{abstract}
 It is only in exceptional cases that a $_2F_1(z)$-series with rational parameters and a rational argument, apart from the cases for 
 $z \in \{ \pm 1, \frac{1}{2} \}$ associated with classical hypergeometric identities, admits an evaluation given by a combination of 
 $\Gamma$-values with rational arguments. In this paper, we present a new and integration-based approach toward the construction 
 of special values for $_2F_1$-series of the desired form. We apply this approach using a $_2F_1\big(\frac{1}{4}\big)$-identity 
 originally due to Gosper and later considered by Vidunas, Ebisu, and Zudilin, to evaluate a ${}_{2}F_{1}$-series of convergence rate
 $\big(\frac{172872}{185039}\big)^2$. With regard to extant research on so-called ``strange'' ${}_{2}F_{1}$-evaluations, as in the work 
 of Ebisu and Zeilberger, our new series seems to have the largest numerator/denominator in its argument. 
\end{abstract}

\section{Introduction}
 The rising factorial or Pochhammer symbol is defined so that $(x)_0 = 1$ and so that $(x)_n = x (x+1) \cdots (x + n - 1)$ for a positive 
 integer $n$. We make use of the notational shorthand such that 
\begin{equation*}
 \left[ \begin{matrix} \alpha, \beta, \ldots, \gamma \vspace{1mm} \\ 
 A, B, \ldots, C \end{matrix} \right]_{n} = \frac{ (\alpha)_{n} (\beta)_{n} 
 \cdots (\gamma)_{n} }{ (A)_{n} (B)_{n} \cdots (C)_{n}}. 
\end{equation*}
 Generalized hypergeometric series may be defined so that 
\begin{equation}\label{displaypFq}
 {}_{p}F_{q}\!\!\left[ \begin{matrix} 
 a_{1}, a_{2}, \ldots, a_{p} \vspace{1mm}\\ 
 b_{1}, b_{2}, \ldots, b_{q} \end{matrix} \ \Bigg| \ x 
 \right] = \sum_{n=0}^{\infty} 
 \left[ \begin{matrix} 
 a_{1}, a_{2}, \ldots, a_{p} \vspace{1mm} \\ 
 b_{1}, b_{2}, \ldots, b_{q} \end{matrix} \right]_{n} \frac{z^n}{n!}, 
\end{equation}
 with reference to standard texts on such series, as in the monograph from Slater \cite{Slater1966}. Since $_pF_q$-series are of central 
 importance in the application of special functions, this serves as a basis for the development of new methods to express series as in 
 \eqref{displaypFq}, for specified values for $z$ (the \emph{argument} or \emph{convergence rate}) and for  expressions of the
 forms $a_i$ and $b_i$ (the \emph{parameters}). Typically, such expressions for $_pF_q$ series would be given as combinations of values 
 of the $\Gamma$-function defined via the Euler integral 
\begin{equation}\label{displayGamma}
 \Gamma(x) = \int_0^{\infty} t^{x-1} e^{-t} \, dt
\end{equation} 
 for $\Re(x) > 0$, referring to the classic text by Rainville \cite[\S2]{Rainville1960} for background material on special functions as in 
 \eqref{displayGamma}. \emph{Gaussian hypergeometric functions} refer to functions given by $_2F_1(z)$-series and may be seen as 
 providing the most fundamental instances of generalized hypergeometric series. The foregoing points naturally lead toward research 
 topics given by the expression of $_2F_1$-series with rational parameters and arguments in terms of the $\Gamma$-function, apart 
 from the cases given by combinations of arguments/parameters corresponding to 
 classical hypergeometric identities. Such research topics have been 
 explored, in remarkable ways, by Joyce and Zucker \cite{JoyceZucker1991,ZuckerJoyce2001,JoyceZucker2002}, Ebisu \cite{Ebisu2017}, 
 and Zudilin \cite{Zudilin2025}. In this paper, we introduce a technique, relying on the use of cyclotomic polynomials obtained via the 
 manipulation of an integral identity from Euler, for constructing new evaluations for $_2F_1$-series in the vein of the Joyce--Zucker 
 $_2F_1$-evaluations. We apply this technique using a $_2F_1\big(\frac{1}{4}\big)$-series identity originally due to 
 Gosper\footnote{Given in an unpublished letter to D.\ Stanton, XEROX Palo Alto Research Center, 21 December 1977.} and later 
 considered by Vidunas \cite{Vidunas2002}, Ebisu\footnote{See also \url{https://arxiv.org/abs/1607.04742}.} \cite{Ebisu2017}, and 
 Zudilin \cite{Zudilin2025}. 

 Informally, the key to our technique relies on the use of 
 known $_2F_1$-identities together with manipulations of the classical relation, dating back to Euler, whereby 
\begin{equation}\label{Eulerint}
 {}_{2}F_{1}\!\!\left[ \begin{matrix} 
 a, b \vspace{1mm}\\ 
 c \end{matrix} \ \Bigg| \ z 
 \right] = 
\frac{\Gamma(c)}{\Gamma(b)\Gamma(c-b)}
\int_0^1
t^{b-1}(1-t)^{c-b-1}(1-zt)^{-a}\,dt 
\end{equation}
 for $|z| < 1$ and $\Re(c) > \Re(b) > 0$ \cite[\S4.30]{Rainville1960} \cite[\S1.1]{Slater1966}, in such a way so as to produce, in a specific 
 way, a cyclotomic polynomial within the integrand of a variant of the right-hand side of \eqref{Eulerint}. 

 The most fundamental out of the classically known ${}_{2}F_{1}$-identities
 include the famous \emph{Gauss summation theorem} \cite[\S1.1]{Slater1966} 
\begin{equation}\label{displayGaussfirst}
 {}_{2}F_{1}\!\!\left[ \begin{matrix} 
 a, b \vspace{1mm}\\ 
 c \end{matrix} \ \Bigg| \ 1 
 \right] = \frac{\Gamma(c)\,\Gamma(c-a-b)}
{\Gamma(c-a)\,\Gamma(c-b)}, 
\end{equation}
 along with 
 \emph{Gauss's second summation theorem} \cite[\S1.7]{Slater1966} 
\begin{equation}\label{Gausssecond}
 {}_{2}F_{1}\!\!\left[ \begin{matrix} 
 a, b \vspace{1mm}\\ 
 \frac{a+b+1}{2} \end{matrix} \ \Bigg| \ \frac{1}{2} 
 \right]
 = \frac{ \Gamma\left( \frac{1}{2} \right) \Gamma\left( \frac{a+b+1}{2} \right) }{ \Gamma\left( 
 \frac{a+1}{2} \right) \Gamma\left( \frac{b+1}{2} \right) }, 
\end{equation}
 together with \emph{Bailey's summation theorem} \cite[\S1.7]{Slater1966} 
\begin{equation}\label{Baileytheorem}
 {}_{2}F_{1}\!\!\left[ \begin{matrix} 
 a, 1-a \vspace{1mm}\\ 
 c \end{matrix} \ \Bigg| \ \frac{1}{2} 
 \right]
 = \frac{\Gamma\left( \frac{c}{2} \right) \Gamma\left( \frac{c+1}{2} \right) }{ 
 \Gamma\left( \frac{c+a}{2} \right) \Gamma\left( \frac{c-a+1}{2} \right) } 
\end{equation}
 and \emph{Kummer's theorem} \cite[\S1.7]{Slater1966} 
\begin{equation}\label{Kummertheorem}
 {}_{2}F_{1}\!\!\left[ \begin{matrix} 
 a, b \vspace{1mm}\\ 
 1 + a - b \end{matrix} \ \Bigg| \ -1 
 \right]
 = 2^{-a} \frac{ \Gamma(1+a-b) \Gamma\left( 1 + \frac{a}{2} \right) }{ \Gamma(1+a) \Gamma\left( 
 1 + \frac{a}{2} - b \right) }. 
\end{equation}
 Often, elementary and special functions can be expressed by specifying rational values for the parameters in $_2F_1(z)$-series, as in 
 the Maclaurin series expansion 
\begin{equation}\label{expandarcsin}
 {}_{2}F_{1}\!\!\left[ \begin{matrix} 
 \frac{1}{2}, \frac{1}{2} \vspace{1mm}\\ 
 \frac{3}{2} \end{matrix} \ \Bigg| \ z 
 \right]
 = \frac{\arcsin\left( \sqrt{z} \right) }{\sqrt{z}} 
\end{equation}
 or the Maclaurin series expansion 
\begin{equation}\label{expandK} 
 {}_{2}F_{1}\!\!\left[ \begin{matrix} 
 \frac{1}{2}, \frac{1}{2} \vspace{1mm}\\ 
 1 \end{matrix} \ \Bigg| \ z 
 \right]
 = 
 \frac{2}{\pi} \text{{\bf K}}\left( \sqrt{z} \right) 
\end{equation}
 associated with the special function 
\begin{equation}\label{displayK}
 \text{{\bf K}}(k) = \int_{0}^{\frac{\pi}{2}} \left( 1 - k^2 \sin^2 \theta \right)^{-1/2} \, d\theta 
\end{equation}
 referred to as the complete elliptic integral of the first kind. 
 
 In view of the arguments $z \in \{ \pm1, \frac{1}{2} \}$ among the classical hypergeometric identities among \eqref{displayGaussfirst}, 
 \eqref{Gausssecond}, \eqref{Baileytheorem}, and \eqref{Kummertheorem}, and in view of the parameters arising among 
 $_{p}F_{q}$-expansions for elementary and special function as in \eqref{expandarcsin} and \eqref{expandK}, it should be emphasized 
 that it is only in very exceptional cases that given $_2F_1$-series with a rational argument and with rational parameters can be explicitly 
 expressed, i.e., as a combination of $\Gamma$-values with rational arguments. This emphasizes the remarkable nature of the moviating 
 result highlighed in Section \ref{subsectionmotive} below, and this may be further illustrated with the special values for 
 ${}_{2}F_{1}$-series due to Joyce and Zucker \cite{JoyceZucker1991,ZuckerJoyce2001,JoyceZucker2002} and reviewed 
 in Section \ref{secBack}. 

\subsection{A motivating result}\label{subsectionmotive}
 A main result introduced in this paper is the evaluation 
\begin{equation}\label{maineval}
 {}_{2}F_{1}\!\!\left[ \begin{matrix} 
 \frac{7}{48}, \frac{31}{48} \vspace{1mm}\\ 
 \frac{9}{8} \end{matrix} \ \Bigg| \ \left(\frac{172872}{185039}\right)^2 
 \right] = \frac{185039^{7/24}\,\Gamma^3\!\left(\tfrac{1}{8}\right)\Gamma\!\left(\tfrac{5}{8}\right)}
{672\,(1+\sqrt{2})\,3^{1/8}\,\pi^2}. 
\end{equation}

 Ebisu \cite{Ebisu2017} derives many so-called ``strange'' ${}_{2}F_{1}$-identities for constants given by combinations of $\Gamma$-values 
 with rational arguments, using an innovative applications of Appell series. Informally, the degree of strangeness can be thought of as 
 being given by the size of the denominator of the ration argument of a given ${}_{2}F_{1}$-series that is not evaluable in a direct way via 
 classically known hypergeometric identities, with larger values denoting greater ``strangeness.'' Our new result highlighed in 
 \eqref{maineval} appears to have the greatest degree of ``strangeness,'' out of known ${}_{2}F_{1}$-evaluations. 

\section{Background}\label{secBack}
 Evaluations for $_2F_1$-series as in \eqref{maineval} and as in the Joyce--Zucker evaluations 
 \cite{JoyceZucker1991,ZuckerJoyce2001,JoyceZucker2002} reviewed in Section \ref{subsecJZ} below 
 may be seen as appearing sporadically and as resisting systematic derivations. One may compare this to the work of 
 Gessel and Stanton \cite{GesselStanton1982}, who
 identified what may be seen as 
 genuinely non-classical $\Gamma$-product evaluations for so-called ``strange'' hypergeometric series. 

 Computational approaches due to Apagodu and Zeilberger \cite{ApagoduZeilberger2008} and obtained from Zeilberger's 
 computer system {\tt EKHAD}\footnote{See 
 \url{https://sites.math.rutgers.edu/~zeilberg/mamarim/mamarimhtml/strange.html}.}
 yield further families of so-called ``strange'' 
 ${}_{2}F_1$-identities, such as 
\begin{equation*}
 {}_{2}F_{1}\!\!\left[ \begin{matrix} 
 -n, -n - \frac{1}{2} \vspace{1mm}\\ 
 4n + \frac{9}{2} \end{matrix} \ \Bigg| \ \frac{1}{5} 
 \right]
 = \left( \frac{16384}{15625} \right)^{n} 
 \left[ \begin{matrix} \frac{9}{8}, \frac{11}{8}, \frac{13}{8}, \frac{15}{8} \vspace{1mm} \\ 
 \frac{6}{5}, \frac{9}{5}, \frac{13}{10}, \frac{17}{10} \end{matrix} \right]_{n} 
\end{equation*}
 Since our main technique relies on transforming known and non-classical ${}_{2}F_{1}(r)$-identities for a fixed and rational convergence 
 rate $r \not\in \{ \pm 1, \frac{1}{2} \}$ to produce an evaluation for a new hypergeometric series of a different convergence rate, 
 this may be seen in relation to the work of Campbell \cite{Campbell2023Guillera}, who showed
 how Zeilberger's ${}_{2}F_{1}$-identities 
 can be transformed, via the Wilf--Zeilberger method \cite{PetkovsekWilfZeilberger1996}, to produce closed-form 
 evaluations for accelerated hypergeometric series. 

 Further modern developments include numerous research works concerning what is referred to as \emph{Gosper's strange series} 
 \cite{Campbell2023Gosper,Chu2017,Ebisu2013}, namely $$ {}_{2}F_{1}\!\!\left[ \begin{matrix} 1-a, b \vspace{1mm} \\ b + 
 2 \end{matrix} \ \Bigg| \ \frac{b}{a+b} \right] = (b+1) \left( \frac{a}{a+b} \right)^{a}. $$ Moreover, research from Ebisu \cite{Ebisu2017} 
 suggests a deeper arithmetic structure underlying the above phenomenon outlined above concerning so-called ``strange'' 
 ${}_{2}F_{1}$-series. This may be seen in relation to the wide variety of methods in the derivation of closed forms for such series, as in 
 the work of Campbell and Levrie \cite{CampbellLevrie2024On}, who applied Zeilberger's algorithm 
 \cite[\S6]{PetkovsekWilfZeilberger1996} to prove the relation 
\begin{equation}\label{CLclosed}
 {}_{2}F_{1}\!\!\left[ \begin{matrix} 
 \frac{1}{2}, \frac{2}{3} \vspace{1mm}\\ 
 \frac{1}{6} \end{matrix} \ \Bigg| \ \frac{1}{4} 
 \right] =
 \frac{4}{3} \sqrt[3]{2}, 
\end{equation}
 directly inspired by the Joyce--Zucker series reviewed in Section \ref{subsecJZ}. 
 
 Restricting the convergence rate of ${}_{2}F_{1}(z)$ drastically reduces the number of valid identities, making those that remain 
 appear especially elegant, as illustrated with 
 our new result in \eqref{maineval}, 
 along with known closed forms as in \eqref{CLclosed}. 
 This motivates our focus on rational arguments as a natural setting for studying hypergeometric ``strangeness.''

\subsection{Joyce and Zucker's special values of hypergeometric series}\label{subsecJZ}
 As highlighted in the Wolfram Mathworld site for hypergeometric 
 functions\footnote{See \url{https://mathworld.wolfram.com/HypergeometricFunction.html}}, Zucker and Joyce \cite{ZuckerJoyce2001}
 introduced remarkable and algebraic evaluations for ${}_{2}F_{1}$-series with 
 rational arguments and parameters, including 
\begin{align*}
 {}_{2}F_{1}\!\!\left[ \begin{matrix} 
 \frac{1}{8}, \frac{3}{8} \vspace{1mm}\\ 
 \frac{1}{2} \end{matrix} \ \Bigg| \ \frac{2400}{2401} 
 \right] & =
 \frac{2}{3} \sqrt{7}, \\ 
 {}_{2}F_{1}\!\!\left[ \begin{matrix} 
 \frac{1}{6}, \frac{1}{3} \vspace{1mm}\\ 
 \frac{1}{2} \end{matrix} \ \Bigg| \ \frac{25}{27}
 \right] & =
 \frac{3}{4} \sqrt{3}, \\ 
 {}_{2}F_{1}\!\!\left[ \begin{matrix} 
 \frac{1}{6}, \frac{1}{2} \vspace{1mm}\\ 
 \frac{2}{3} \end{matrix} \ \Bigg| \ \frac{125}{128}
 \right] & =
 \frac{4}{3} \sqrt[6]{2}, \ \text{and} \\ 
 {}_{2}F_{1}\!\!\left[ \begin{matrix} 
 \frac{1}{12}, \frac{5}{12} \vspace{1mm}\\ 
 \frac{1}{2} \end{matrix} \ \Bigg| \ \frac{1323}{1331} \right] & =
 \frac{3}{4} \sqrt[4]{11}. 
\end{align*} 

 The crux of Zucker and Joyce's derivations of the algebraic closed forms listed above is given by a combination of applications of the 
 classical known ${}_2F_{1}$-transform \cite[\S2]{ErdelyiMagnusOberhettingerTricomi1953}
\begin{multline*}
 \frac{2 \Gamma\left( \frac{1}{2} \right) \Gamma\left( a + b + \frac{1}{2} 
 \right) }{ \Gamma\left( a + \frac{1}{2} \right) \Gamma\left( b + \frac{1}{2} \right) } 
 {}_{2}F_{1}\!\!\left[ \begin{matrix} 
 a, b \vspace{1mm}\\ 
 \frac{1}{2} \end{matrix} \ \Bigg| \ z 
 \right] = \\ 
 {}_{2}F_{1}\!\!\left[ \begin{matrix} 
 2a, 2b \vspace{1mm}\\ 
 a + b + \frac{1}{2} \end{matrix} \ \Bigg| \ \frac{1-\sqrt{z}}{2} 
 \right] + 
 {}_{2}F_{1}\!\!\left[ \begin{matrix} 
 2a, 2b \vspace{1mm}\\ 
 a + b + \frac{1}{2} \end{matrix} \ \Bigg| \ \frac{1 + \sqrt{z}}{2} \right] 
\end{multline*} 
 together with application of \emph{elliptic integral singular values}, i.e., special values (given by combinations of $\Gamma$-values
 with rational arguments) for the function in \eqref{displayK} at algebraic arguments. 
 This is in contrast to our approach, which does not involve the theory of elliptic integrals. 

\section{Main results}
 Of central interest, for the purposes of this paper, is the hypergeometric identity 
 \begin{equation}\label{mainGosper}
 {}_{2}F_{1}\!\!\left[ \begin{matrix} 
 \frac{1}{2}, b \vspace{1mm}\\ 
 \frac{5}{2} - 2 b \end{matrix} \ \Bigg| \ \frac{1}{4} 
 \right] =
 \frac{2^{2b} \sqrt{\pi}}{3} \frac{ \Gamma\left( \frac{5}{2} - 2 b \right) }{ \Gamma^{2}\left( 
 \frac{3}{2} - b \right)} 
\end{equation}
 attributed to Gosper, as in the work of Vidunas \cite{Vidunas2002}, who introduced a generalization of \eqref{mainGosper}, and who 
 sketched a difference equations-based way of deriving this generalization. While \eqref{mainGosper} can be derived through an 
 application of Zeilberger's algorithm, we introduce a proof of \eqref{mainGosper} that is motivated by how the methods involved in 
 this proof may be seen as laying a foundation for the derivation of new results as in Section \ref{subsectionmotive}. To the best of our 
 knowledge, no full or explicit proof of \eqref{mainGosper} has previously been given, despite how \eqref{mainGosper} forms a 
 cornerstone for what are described as \textit{strange} ${}_{2}F_{1}$-identities, motivating our new and integration-based proof below. 
 The research interest in our proof of Gosper's ${}_{2}F_{1}\left( \frac{1}{4} \right)$-relation in \eqref{mainGosper} may also be seen in 
 relation to the recent work of Chen and Chu on the extension of ${}_{3}F_{2}\left( \frac{3}{4} \right)$-identities due to Gosper 
 \cite{ChenChu2025}, and (with regard to the convergence rate of $\frac{1}{4}$ in \eqref{mainGosper}) in relation to a number of recent 
 research works concerning fast-congerving hypergeometric series due to
 Gosper \cite{Campbelltoappear,CampbellLevrie2024Proof,Chu2022,Nimbran2024}. 

 As in the below proof, we make use of the beta integral
 $$ \operatorname{B}(z_1, z_2) = \int_{0}^{1} t^{z_1-1} (1-t)^{z_2-1} \, dt $$
 together with the relation 
\begin{equation}\label{BGamma}
 \operatorname{B}(z_1,z_2) = \frac{\Gamma(z_1) \Gamma(z_2)}{\Gamma(z_1 + z_2)}.
\end{equation} 

\begin{theorem}\label{theoremGosper}
 Gosper's formula in \eqref{mainGosper} holds for all values $b$ such that $b$ such that the left-hand side of \eqref{mainGosper} 
 converges and such that its lower parameter is not a an integer in $\mathbb{Z}_{\leq 0}$ (cf.\ \cite{Vidunas2002}). 
\end{theorem}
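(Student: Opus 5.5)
We plan to prove \eqref{mainGosper} by converting the left-hand side into an integral via Euler's formula \eqref{Eulerint}, then using an algebraic substitution that turns the integrand into a beta integral, and finally applying \eqref{BGamma}. We first treat $b$ in a range where all the integrals converge absolutely, say $b<1$ real, and extend to the full range in Theorem \ref{theoremGosper} by analytic continuation in $b$. Both sides are meromorphic in $b$, and the left-hand side is analytic wherever $\frac52-2b\notin\mathbb{Z}_{\le 0}$.

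\emph{Step 1 (Euler integral and squaring).} Apply \eqref{Eulerint} with the roles $a\mapsto b$, $b\mapsto\frac12$, $c=\frac52-2b$, so that $c-\frac12=2-2b$. This gives
\[
{}_{2}F_{1}\!\left[\begin{matrix}\frac12,b\\ \frac52-2b\end{matrix}\,\Big|\,\frac14\right]
=\frac{\Gamma(\frac52-2b)}{\Gamma(\frac12)\Gamma(2-2b)}\int_0^1 t^{-1/2}(1-t)^{1-2b}\Bigl(1-\frac t4\Bigr)^{-b}dt .
\]
The substitution $t=s^2$ produces $2\int_0^1(1-s^2)^{1-2b}(1-\frac{s^2}{4})^{-b}\,ds$.

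\emph{Step 2 (cubic factorisation).} The key observation is the factorisation
\[
(1-s)^2(2+s)=2-3s+s^3, \qquad (1+s)^2(2-s)=2+3s-s^3,
\]
so that $(1-s^2)^2(4-s^2)=4-(3s-s^3)^2$. Hence the $b$-dependent part of the integrand is a function of the single variable $u=\frac{3s-s^3}{2}$ alone, namely $4^b(1-u^2)^{-b}$. This is the cyclotomic-type structure alluded to in the introduction. Parametrising $u=\sin\varphi$ and $s=2\sin(\varphi/3)$, and using $\cos 3x=\cos x\,(1-4\sin^2x)$, gives
\[
1-s^2=\frac{\cos\varphi}{\cos(\varphi/3)}, \qquad ds=\tfrac23\cos(\varphi/3)\,d\varphi .
\]
The integral therefore becomes a constant multiple of $\int_0^{\pi/2}\cos^{\alpha}\varphi\,\sec^{\beta}(\varphi/3)\,d\varphi$ for explicit exponents. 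The argument now rests on the stray factor $\sec(\varphi/3)$; the exponents should be checked carefully at this point, and it may be cleaner first to integrate by parts once in $s$ so that the power of $(1-s^2)$ left over is exactly one.

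\emph{Step 3 (main obstacle: removing the $\sec(\varphi/3)$).} The integral is not yet a beta integral because of the factor $1/\cos(\varphi/3)$. To remove it, we will symmetrise over the three branches $x_k=\frac{\varphi+2\pi k}{3}$, $k=0,1,2$, of the inverse cubic. By Vieta applied to $\cos 3x=4\cos^3x-3\cos x$, these branches satisfy
\[
\sum_{k=0}^{2}\sec x_k=-\frac{3}{\cos\varphi}.
\]
The summed integrand is therefore a pure power of $\cos\varphi$, and the resulting integral equals $\frac12\operatorname{B}(\cdot,\tfrac12)$, which \eqref{BGamma} evaluates.

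The difficulty is to justify that the $k=1,2$ branches contribute integrals that are either the same as, or explicitly related to, the $k=0$ one. These branches correspond to $s\in[-2,-1]$ and $s\in[1,2]$, that is, to $t\in[1,4]$ in the original Euler integral. We would try to show that the combined contribution over $t\in[1,4]$ reorganises, by the reflection $s\mapsto -s$ and a contour deformation around $t=1$ producing phase factors $e^{\pm i\pi(1-2b)}$, into a fixed multiple of the $[0,1]$ integral. If this bookkeeping closes, collecting the constants $\Gamma(\frac12)$, $\Gamma(2-2b)$ and $4^b$, and applying the duplication formula to $\Gamma(2-2b)$ to match $\Gamma^2(\frac32-b)$, should yield exactly the right-hand side of \eqref{mainGosper}. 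As a sanity check, we will verify the result at $b=0$, where both sides equal $1$.
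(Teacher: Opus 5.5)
Apart from a spurious constant, your Steps 1 and 2 are correct and already amount to a complete proof. The ``main obstacle'' of Step 3 does not exist; it comes from skipping the exponent check you flagged yourself.

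After Step 2 the $s$-integrand is $2(1-s^2)(1-u^2)^{-b}$, so the leftover power of $1-s^2$ is already exactly one. Since $u=\frac{3s-s^3}{2}$ has $du=\frac32(1-s^2)\,ds$, that leftover factor is precisely the Jacobian. In your trigonometric form this is the cancellation $(1-s^2)\,ds=\frac{\cos\varphi}{\cos(\varphi/3)}\cdot\frac23\cos(\varphi/3)\,d\varphi=\frac23\cos\varphi\,d\varphi$. So no $\sec(\varphi/3)$ survives, and no integration by parts is needed.

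The spurious constant is the $4^b$. In fact $(1-s^2)^2\bigl(1-\frac{s^2}{4}\bigr)=1-u^2$ exactly, so the $b$-dependent part is $(1-u^2)^{-b}$. Carrying $4^b$ along would produce $2^{4b}$ instead of $2^{2b}$ at the end, and your check at $b=0$ cannot detect that. With this corrected, and since $u$ increases from $0$ to $1$ with $s$, for $\Re b<1$ you get
\[
{}_{2}F_{1}\!\left[\begin{matrix}\frac12,b\\ \frac52-2b\end{matrix}\,\Big|\,\frac14\right]
=\frac{\Gamma(\frac52-2b)}{\Gamma(\frac12)\Gamma(2-2b)}\cdot\frac43\int_0^1(1-u^2)^{-b}\,du
=\frac23\,\frac{\Gamma(\frac52-2b)\,\Gamma(1-b)}{\Gamma(2-2b)\,\Gamma(\frac32-b)} .
\]
The duplication formula $\Gamma(2-2b)=\pi^{-1/2}2^{1-2b}\Gamma(1-b)\Gamma(\frac32-b)$ turns this into exactly the right-hand side of \eqref{mainGosper}. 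Your analytic-continuation argument then handles the remaining $b$. Step 3 should be deleted: its conclusion is explicitly conditional on branch and phase bookkeeping that is never carried out, and it is aimed at a factor that is not present.

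Once repaired, your argument is genuinely different from the paper's. The paper first applies Euler's transformation to pass to ${}_{2}F_{1}(2-2b,\frac52-3b;\frac52-2b;\frac14)$. It then substitutes $t=\frac{4u}{(1+u)^2}$, uses the cyclotomic factorization $(1-u)(1+u+u^2)=1-u^3$ and $v=u^3$, and arrives at $\operatorname{B}(\frac56-b,2b-1)-\operatorname{B}(\frac76-b,2b-1)$. Those two terms individually converge only for $\frac12<\Re b<\frac56$, and the difference must then be simplified with the reflection and multiplication formulas. Your identity $4-(3s-s^3)^2=(1-s^2)^2(4-s^2)$ plays the role of the paper's cyclotomic relation. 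It instead collapses the integral to the single beta value $\frac12\operatorname{B}(\frac12,1-b)$, valid on the whole half-plane $\Re b<1$ and requiring only the duplication formula. It is therefore the shorter and cleaner of the two routes.
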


\begin{proof}
 We begin by applying Euler's ${}_{2}F_{1}$-transform such that  $$ {}_{2}F_{1}\!\!\left[ \begin{matrix}   a, b \vspace{1mm}\\   c \end{matrix} 
 \ \Bigg| \ z   \right] = (1-z)^{c-a-b} {}_{2}F_{1}\!\!\left[ \begin{matrix} 
 c - a, c - b \vspace{1mm}\\ 
 c \end{matrix} \ \Bigg| \ z
 \right] $$
 Applying this with $a=\frac{1}{2}$, $c=\frac{5}{2}-2b$, and $z=\frac{1}{4}$ 
 and then using Euler's integral relation in \eqref{Eulerint}, we obtain 
\begin{multline*}
 {}_{2}F_{1}\!\!\left[ \begin{matrix} 
 2-2b, \frac{5}{2}-3b \vspace{1mm}\\ 
 \frac{5}{2}-2b \end{matrix} \ \Bigg| \ \frac{1}{4} 
 \right]  = 
\frac{\Gamma\!\left(\frac{5}{2}-2b\right)}
{\Gamma\!\left(\frac{5}{2}-3b\right)\Gamma(b)} \\
 \times
\int_0^1
t^{\frac{3}{2}-3b}
(1-t)^{b-1}
\left(1-\frac{t}{4}\right)^{2b-2} \,dt.
\end{multline*}
 Enforcing the substitution $t = \frac{4u}{(1+u)^2}$ yields 
\begin{equation*}
4^{\frac{5}{2}-3b}
\int_0^1
u^{\frac{3}{2}-3b}
(1-u)^{2b-1}
(1+u+u^2)^{2b-2}
\,du.
\end{equation*}
 and, by exploiting the cyclotomic relation $ (1-u)(1+u+u^2) = 1-u^3$, we obtain
\begin{equation*}
\int_0^1 \left( u^{\frac{3}{2}-3b} - u^{\frac{5}{2}-3b} \right) (1-u^3)^{2b-2} \, du.
\end{equation*}
 After enforcing a substitution of the form $v = u^3$, we find that $$ {}_{2}F_{1}\!\!\left[ \begin{matrix} 
 2 - 2b, \frac{5}{2} - 3b \vspace{1mm}\\ 
 \frac{5}{2} - 2b \end{matrix} \ \Bigg| \ \frac{1}{4} 
 \right] = \frac{\Gamma\!\left(\frac{5}{2} - 2b\right)}
{3\,\Gamma\!\left(\frac{5}{2} - 3b\right)\Gamma(b)}
\,4^{\frac{5}{2} - 3b}
\Delta, $$
 writing
\begin{equation*}
 \Delta = \mathrm{B}\left(\frac{5}{6} - b, 2b - 1\right) - \mathrm{B}\left(\frac{7}{6} - b, 2b - 1\right). 
\end{equation*} 
 From the relation in \eqref{BGamma} together with the reflection formula for the $\Gamma$-function, we obtain 
\begin{equation*}
 \Delta = -\frac{\Gamma(2b - 1) \Gamma\left(\frac{5}{6} - b\right) \Gamma\left(\frac{7}{6} - b\right) \cos(\pi b)}{\pi}. 
\end{equation*}
 Routine applications of the Gauss multiplication formula and the above reflection formula 
 give an equivalent formulation of the desired result. 
 \end{proof}

\subsection{A new $_2F_1$-evaluation}
 We proceed, using Theorem \ref{theoremGosper}, to construct a proof of the motivating result from Section \ref{subsectionmotive}. 
 
 Applying the known quadratic transform 
 \begin{equation*}
 {}_{2}F_{1}\!\!\left[ \begin{matrix} a, b \vspace{1mm}\\ 2 b \end{matrix} \ \Bigg| \ z \right] = (1-z)^{b-a} \left( 1 - \frac{z}{2} \right)^{a - 
 2 b} {}_{2}F_{1}\!\!\left[ \begin{matrix} b - \frac{a}{2}, b + \frac{1-a}{2} \vspace{1mm} \\ b + \frac{1}{2} \end{matrix} \ \Bigg| \ \frac{z^2}{(2 
 -z)^2} \right] 
\end{equation*}
 and setting $b = \frac{5}{8}$ and $a = \frac{1}{2}$, we write 
\begin{equation}\label{afterfirstquad}
 {}_{2}F_{1}\!\!\left[ \begin{matrix} 
 \frac{1}{2}, \frac{5}{8} \vspace{1mm}\\ 
 \frac{5}{4} \end{matrix} \ \Bigg| \ z 
 \right] = 
 (1-z)^{1/8}\left(1-\frac{z}{2}\right)^{-3/4} {}_{2}F_{1}\!\!\left[ \begin{matrix} 
 \frac{3}{8}, \frac{7}{8} \vspace{1mm}\\ 
 \frac{9}{8} \end{matrix} \ \Bigg| \ \frac{z^2}{(2-z)^2}
 \right]. 
\end{equation}
 To the right-hand side of \eqref{afterfirstquad}, we apply the known quadratic transform 
 \begin{equation*}
 {}_{2}F_{1}\!\!\left[ \begin{matrix} 
 a, b \vspace{1mm}\\ 
 \frac{a+b+1}{2} \end{matrix} \ \Bigg| \ z 
 \right] = (1-2z)^{-a} {}_{2}F_{1}\!\!\left[ \begin{matrix} 
 \frac{a}{2}, \frac{a+1}{2} \vspace{1mm}\\ 
 \frac{a+b+1}{2} \end{matrix} \ \Bigg| \ \frac{4z(z-1)}{(2z-1)^2} 
 \right], 
\end{equation*}
 i.e., with $a = \frac{7}{8}$ and $b = \frac{3}{8}$, i.e., so that the right-hand side of \eqref{afterfirstquad}
 may be expressed as 
\begin{equation*}
 (1 - z)^{1/8} \left(1 - \frac{z}{2}\right)^{-3/4} \frac{(2 - z)^{7/4}}{(4 - 4z - z^2)^{7/8}} \, {}_{2}F_{1}\!\!\left[ \begin{matrix} 
 \frac{7}{16}, \frac{15}{16} \vspace{1mm}\\ 
 \frac{9}{8} \end{matrix} \ \Bigg| \ \frac{16z^2(z - 1)}{(4 - 4z - z^2)^2} 
 \right]. 
\end{equation*}
 Applying the cubic transformation: $$ {}_{2}F_{1}\!\!\left[ \begin{matrix} a, a + \frac{1}{2} \vspace{1mm}\\ \frac{4a + 5}{6} \end{matrix} 
 \ \Bigg| \ z \right] 
 = \frac{1}{(1 - 9z)^{2a/3}} {}_{2}F_{1}\!\!\left[ \begin{matrix} 
 \frac{a}{3}, \frac{a}{3} + \frac{1}{2} \vspace{1mm}\\ 
 \frac{4a + 5}{6} \end{matrix} \ \Bigg| \ -\frac{27z(1 - z)^2}{(1 - 9z)^2} 
 \right] $$
 and setting $a = \frac{7}{16}$ and repeating the above procedure, we obtain a $12^{\text{th}}$-degree ${}_{2}F_{1}$-transform 
 involving the argument $$-\frac{432 (z - 2)^8 (z - 1) z^2}{\left(z^2 + 4z - 4\right)^2 \left(z^4 - 136z^3 + 152z^2 - 32z + 16\right)^2}.$$ 
 After setting 
 $z=1/4$, we obtain $\left(\frac{172872}{185039} \right)^2$, and, by using 
 Theorem \ref{theoremGosper} with $b = \frac{5}{8}$, 
 we obtain the desired evaluation in Theorem \ref{theoremGosper}. 

\section{Conclusion}
 Using something of a variant our proof of Theorem \ref{theoremGosper}, by applying the Euler relation in \eqref{Eulerint} in conjunction 
 with the closed form in \eqref{CLclosed}, we have found that 
\begin{equation*}
 {}_{2}F_{1}\!\!\left[ \begin{matrix} 
 \frac{1}{2}, \frac{3}{2} \vspace{1mm}\\ 
 \frac{13}{6} \end{matrix} \ \Bigg| \ -\frac{1}{3} 
 \right] =
 \frac{7}{2^{2/3} \sqrt{3}} - \frac{7 \Gamma^{3}\left( \frac{1}{6} \right)}{2^{14/3} 3^{3/2} \pi^{3/2}}, 
\end{equation*}
 and this appears to be new. This motivates a full exploration as to how our integration-based techniques above could be extended, and 
 we encourage a full exploration of this. 

\subsection*{Acknowledgements}
 The author thanks Dr.\ John M.\ Campbell for useful feedback and for help formatting and organizing this paper.

\end{document}